\newtheorem{theorem}{Theorem}
\newtheorem{lemma}[theorem]{Lemma}
\newtheorem{definition}[theorem]{Definition}
\newtheorem{example}[theorem]{Example}
\newenvironment{proof}{{\bf Proof.}}{\hspace{0.35cm} $\Box$}
\journal{}
\begin{document}

\begin{frontmatter}

\title{Computing Gr\"{o}bner bases of ideal interpolation\tnoteref{label1}}
\tnotetext[label1]{This work was supported  by National Natural Science Foundation of China under Grant No. 11901402.}

\author{Xue Jiang\corref{2} }
\ead{littledonna@163.com}

\author{Yihe Gong\corref{cor1}}
\ead{yhegong@163.com}

\address{Xue Jiang: School of Mathematics and Statistics, Changchun University of Science and Technology, Changchun, China;
 Yihe Gong(Corresponding author): College of Science, Northeast Electric Power University, Jilin, China
}

\cortext[cor1]{Corresponding author.}

\begin{abstract}
 We present algorithms for computing the reduced Gr\"{o}bner basis of the vanishing ideal of a finite set of points in a frame of ideal interpolation. Ideal interpolation is defined by a linear projector whose kernel is a polynomial ideal. In this paper, we translate interpolation condition functionals into formal power series via Taylor expansion, then the
  reduced Gr\"{o}bner basis is read from formal power series by Gaussian elimination. Our algorithm has a polynomial time complexity, an example suggests that our method compares favorably with MMM algorithm in some special cases.
\end{abstract}
\begin{keyword}
Gr\"{o}bner basis\sep Vanishing ideal \sep Ideal interpolation
\end{keyword}
\end{frontmatter}

\section{Introduction}\label{sec:Int}
Let $\mathbb{F}$ be either the real field $\mathbb{R}$ or the complex field $\mathbb{C}$. Polynomial interpolation is to construct a polynomial $g$ belonging to a finite-dimensional subspace of $\mathbb{F}[{\bf X}]$ that agrees with a given function $f$ at the data set, where $\mathbb{F}[{\bf X}] := \mathbb{F}[x_1, x_2, \dots, x_d]$ denotes the polynomial ring in $d$ variables over the field $\mathbb{F}$.

For studying multivariate polynomial interpolation, Birkhoff \cite{Birkhoff} introduced the definition of ideal interpolation. Ideal interpolation is defined by a linear projector whose kernel is a polynomial ideal. In ideal interpolation \cite{de boor2005}, the interpolation condition functionals at an interpolation point $\boldsymbol{\theta} \in \mathbb{F}^d$ can be described by a linear space ${\rm span}\{\delta_{\boldsymbol{\theta}} \circ P(D), P \in P_{\boldsymbol{\theta}}\}$, where $P_{\boldsymbol{\theta}}$ is a $D$-invariant polynomial subspace, $\delta_{\boldsymbol{\theta}}$ is the evaluation functional at $\boldsymbol{\theta}$ and $P(D)$ is the differential operator induced by $P$. The classical examples of ideal interpolation are Lagrange interpolation and univariate Hermite interpolation.

For an ideal interpolation, suppose that $\Delta$ is the finite set of interpolation condition functionals. Then the set of all polynomials that vanish at $\Delta$ constitutes a 0-dimensional ideal, which is denoted by $I(\Delta)$, namely,
$$I(\Delta):=\{f \in \mathbb{F}[{\bf X}]: L(f)=0, \forall L \in \Delta\}.$$
We refer the readers to \cite{de boor2005} and \cite{Boris09new} for more details about ideal interpolation.

Gr\"{o}bner bases \cite{Cox2007}, introduced by Buchberger \cite{Buchberger} in 1965, have been applied successfully in various fields of mathematics and to many types of problems. There are several methods for computing Gr\"{o}bner bases of vanishing ideals. For any point set $\Theta \subset \mathbb{F}^d$ and any fixed monomial order, BM algorithm \cite{BM1982} yields the reduced Gr\"{o}bner basis and a reducing interpolation Newton basis for a $d$-variate Lagrange interpolation on $\Theta$. BM algorithm computes the vanishing ideal of a finite set of points (in affine space) without multiplicities.
\cite{ABKR} presents a variant of BM algorithm  which is more effective for the computation over $\mathbb{Q}$, and also considers the case that the set of points is in the projective space. As a generalization of univariate Newton interpolation, Farr and Gao \cite{FarrGao} give an algorithm that computes the reduced Gr\"{o}bner basis for vanishing ideals under any monomial order. Applying Taylor expansions to the original algorithm, Farr and Gao's method can be applied to compute the vanishing ideal when the interpolation points have multiplicities, but the multiplicity set needs to be a delta set in $\mathbb{N}^d$. In this paper, we consider the more general case that the multiplicity space of each point just needs to be closed under differentiation.  To avoid solving linear equations, Lederer \cite{Lederer} gives an algorithm to compute the Gr\"{o}bner basis of an arbitrary finite set of points under lexicographic order by induction over the dimension $d$. Jiang, Zhang and Shang \cite{Xue Jiang} give an algorithm for computing the Gr\"{o}bner basis of a single point ideal interpolation.

To the best of our knowledge, for the general case of ideal interpolation, computing the reduced Gr\"{o}bner basis needs to solve a linear system. In 1993, Marinari, M\"{o}ller and Mora \cite{MMM1993} constructed a linear system based on a Vandermonde-like matrix,  and gave an algorithm (MMM algorithm) to compute general 0-dimensional ideals by Gaussian elimination. MMM algorithm has a polynomial time complexity and is one of the most famous algorithms in recent years.


Throughout the paper, $\mathbb{N}$ denotes the set of nonnegative integers. Let $\mathbb{N}^d := \{ (\alpha_1, \alpha_2, \dots, \alpha_d) :  \alpha_i \in \mathbb{N}\}$.  For  $\boldsymbol{\alpha}=(\alpha_1, \alpha_2, \dots, \alpha_d) \in \mathbb{N}^d$,
define $\boldsymbol{\alpha}!:=\alpha_1!\alpha_2!\cdots\alpha_d!$
and denote by ${\bf X}^{\boldsymbol{\alpha}}$ the monomial $x_1^{\alpha_1} x_2^{\alpha_2} \cdots x_d^{\alpha_d}$. Let $\{{\bf X}^{\boldsymbol{\alpha_1}},{\bf X}^{\boldsymbol{\alpha_2}},\dots, {\bf X}^{\boldsymbol{\alpha_n}}\}$ be a finite set of monomials and $\{L_i: \mathbb{F}[{\bf X}] \to \mathbb{F}, i=1,2,\dots, n\}$ be a finite set of linearly independent functionals. We can treat the matrix
\[\left( {\begin{array}{*{20}{c}}
{{L_1}({{\bf X}^{\boldsymbol{\alpha_1}}})}&{{L_1}({{\bf X}^{\boldsymbol{\alpha_2}}})}& \cdots &{{L_1}({{\bf X}^{\boldsymbol{\alpha_n}}})}\\
{{L_2}({{\bf X}^{\boldsymbol{\alpha_1}}})}&{{L_2}({{\bf X}^{\boldsymbol{\alpha_2}}})}& \cdots &{{L_2}({{\bf X}^{\boldsymbol{\alpha_n}}})}\\
 \vdots & \vdots & & \vdots \\
{{L_n}({{\bf X}^{\boldsymbol{\alpha_1}}})}&{{L_n}({{\bf X}^{\boldsymbol{\alpha_2}}})}& \cdots &{{L_n}({{\bf X}^{\boldsymbol{\alpha_n}}})}
\end{array}} \right)\]
as a Vandermonde-like matrix.

%

In this paper, we give algorithms to compute the reduced Gr\"{o}bner basis by Gaussian elimination. Our major idea is based on the formal power series appeared in \cite{de Boor90}.
In that paper, de Boor and Ron deduced excellent properties for the basis of interpolation they computed as the least degree forms of formal power series.
In this paper, we focus on extracting the reduced Gr\"{o}bner basis from interpolation condition functionals. We translate interpolation condition functionals into formal power series via Taylor expansion, then the reduced Gr\"{o}bner basis is read from formal power series by Gaussian elimination.

 The paper is organized as follows. The preliminary is in Section \ref{sec:Preliminaries}. The algorithm for computing the ``reverse" reduced team is in Section \ref{sec:3}.  The method to find the quotient ring basis is in Section \ref{sec:4}. The algorithms for computing the reduced Gr\"{o}bner bases are presented in Section \ref{sec:5}. A special example  is discussed in Section \ref{sec:6}.

\section{Preliminary}\label{sec:Preliminaries}

 A polynomial $P \in \mathbb{F}[{\bf X}]$ can be considered as the formal power series
\begin{displaymath}
    P=\sum_{\boldsymbol{\alpha} \in \mathbb{N}^d} \hat{P}(\boldsymbol{\alpha}) {\bf X}^{\boldsymbol{\alpha}},
\end{displaymath}
where $\hat{P}(\boldsymbol{\alpha})$ are the coefficients in the polynomial $P$.

$P(D):=P(D_{x_1}, D_{x_2}, \dots, D_{x_d})$ is the differential operator induced by the polynomial $P$, where $D_{x_j}:=\frac{\partial}{\partial x_j}$ is the differentiation with respect to the $j$th variable, $j=1, 2, \dots, d$.

Define $D^{\boldsymbol{\alpha}}:=D_{x_1}^{\alpha_1}D_{x_2}^{\alpha_2}\cdots D_{x_d}^{\alpha_d}$. The differential polynomial can be rewritten as
\begin{displaymath}
    P(D)=\sum_{\boldsymbol{\alpha}\in \mathbb{N}^d} \hat{P}(\boldsymbol{\alpha})D^{\boldsymbol{\alpha}}.
\end{displaymath}

Given a monomial order $\prec$, the least monomial of the polynomial $P$ w.r.t. $\prec$ is defined by
\begin{displaymath}
    {\rm lm} (P) := \min_{\prec}  \{ {\bf X}^{\boldsymbol{\alpha}} \mid \hat{P}(\boldsymbol{\alpha}) \neq 0\}.
\end{displaymath}

	\begin{definition} \rm
	    We denote by $\Lambda\{P_1, P_2, \dots, P_n\}$ the set of all monomials that occur in  the polynomials $P_1, P_2, \dots, P_n$ with nonzero coefficients.
	\end{definition}
	
For example, let $P_1=1, P_2=x, P_3=\frac{1}{2}x^2 +y$, then
	
\begin{displaymath}
	        \Lambda \{P_1, P_2, P_3\} = \{1, x, y, x^2\}.
	    \end{displaymath}

\begin{definition}\rm
	    Given a monomial order $\prec$, a set of linearly independent polynomials $\{P_1, P_2, \dots, P_n\} \subset \mathbb{F}[{\bf X}]$ is called a ``reverse" reduced team w.r.t.$\prec$, if
		\begin{enumerate}
		 \item the coefficient of the least monomial of the polynomial $P_i, 1 \le i \le n$ is 1;
		 \item ${\rm lm}(P_i) \not\in \Lambda \{P_j\}, i\neq j,  1 \le i , j \le n$.
		\end{enumerate}
\end{definition}

For example, given the monomial order $\mathrm{grlex}(z \prec y \prec x)$,
			    $$\{1, x, \frac{1}{2}x^2 + y, \frac{1}{6} x^3 - x^2 + xy\}, ~~
			    \{1, y+z, x\},~~ \{1, x+z, -x+y\}$$
		are ``reverse" reduced teams  w.r.t. $\prec$.  	

\section{ Computing a ``reverse" reduced team by Gaussian elimination} \label{sec:3}

Let $T$ and $\bar{T}$ be two sets of monomials in $\mathbb{F}[{\bf X}]$, the notation $\bar{T}-T$ is reserved for the set $\{t : t\in \bar{T}, t\notin T\}$.

Given a monomial order $\prec$, $P_1, P_2, \dots, P_n \in \mathbb{F}[{\bf X}]$ are linearly independent polynomials. Algorithm \ref{alg:revredbasis} yields a ``reverse" reduced team  w.r.t. $\prec$.
	
	\begin{algorithm}
	\caption{A ``reverse" reduced team  w.r.t. $\prec$}
	\label{alg:revredbasis}
	\begin{algorithmic}[1]
	\STATE{{\bf Input:} A monomial order $\prec$.}
	\STATE{\hspace*{1.1cm}~Linearly independent polynomials $P_1, P_2, \dots, P_n \in \mathbb{F}[{\bf X}]$.}
	\STATE{{\bf Output:} $\{P_1^{*}, P_2^{*}, \dots, P_n^{*}\}$, a ``reverse" reduced team w.r.t. $\prec$.}	
	\STATE{//Initialization}
	\STATE{${\rm{List}}:=\Lambda\{P_1, P_2, \dots, P_n\}$};
    \STATE{${\bf X}^{\boldsymbol{\alpha}}:= \rm{min} (\rm{List},\prec)$};

	\STATE{${\rm{U}}:=(\hat{P}_1(\boldsymbol{\alpha}),\hat{P}_2(\boldsymbol{\alpha}),\dots, \hat{P}_n(\boldsymbol{\alpha}))'$;}
    \STATE{$\rm{List}:=\rm{List}-\{{\bf X}^{\boldsymbol{\alpha}}\}$};
    \WHILE{$\rm{List}\neq \emptyset$ }
    \STATE{${\bf X}^{\boldsymbol{\alpha}}:= \rm{min} (\rm{List},\prec)$};
    \STATE{$ v:=(\hat{P}_1(\boldsymbol{\alpha}),\hat{P}_2(\boldsymbol{\alpha}),\dots, \hat{P}_n(\boldsymbol{\alpha}))'$;}
    \STATE{${\rm{U}}:=[{\rm{U}},v]$};
    \STATE{$\rm{List}:=\rm{List}-\{{\bf X}^{\boldsymbol{\alpha}}\}$};

   \ENDWHILE

   \STATE{//Computing}
   \STATE{$\rm{U}^{*}=\rm{rref}[\rm{U}]$}; (reduced row echelon form)
  \STATE{${\rm{List}}:=\Lambda\{P_1, P_2, \dots, P_n\}$};
\FOR{$i=1:n$}
			\STATE{$P_i^{*}={\rm{U}}^{*} (i,:)\cdot (\rm{List},\prec)'$;}
	\ENDFOR
	\RETURN $\{P_1^{*}, P_2^{*}, \dots, P_n^{*}\}$.
	\end{algorithmic}
	\end{algorithm}	
For example, given the monomial order $\mathrm{grlex}(y \prec x)$, $P_1=1$, $P_2=x$,$P_3=\frac{1}{2}x^2+y$, $P_4=\frac{1}{6}x^3+xy+2y$. We get
$$({\rm{List}},\prec)=(1,y,x,xy,x^2,x^3),$$
\[{\rm{U}} = \left( {\begin{array}{*{20}{c}}
1&0&0&0&0&0\\
0&0&1&0&0&0\\
0&1&0&0&{1/2}&0\\
0&2&0&1&0&{1/6}
\end{array}} \right),\]
\[{{\rm{U}}^*} = \left( {\begin{array}{*{20}{c}}
1&0&0&0&0&0\\
0&1&0&0&{1/2}&0\\
0&0&1&0&0&0\\
0&0&0&1&-1&{1/6}
\end{array}} \right),\]	
${P_1}^*=1$, ~~${P_2}^*=\frac{1}{2}x^2+y$,~~ ${P_3}^*=x$, ~~${P_4}^*=\frac{1}{6}x^3-x^2+xy$.
	
 \section{Interpolation monomial basis and quotient ring basis} \label{sec:4}

 Given interpolation conditions $\Delta ={\rm span}\{L_1, L_2, \dots, L_n\}$, where $L_1, L_2, \dots, L_n$ are linearly independent functionals.  Let $T=\{{\bf X}^{\boldsymbol{\alpha_1}},{\bf X}^{\boldsymbol{\alpha_2}},\dots, {\bf X}^{\boldsymbol{\alpha_n}}\}$ be a set of monomials, then $T$ is an interpolation monomial basis for $\Delta$ if the Vandermonde-like matrix ( applying the data map $\{L_i: i=1,2, \dots, n\}$ to the column map $\{{\bf X}^{\boldsymbol{\alpha_j}}: j=1,2, \dots,n\}$) is non-singular.

\begin{definition}
	Let $T$ and $\bar{T}$ be two sets of monomials in $\mathbb{F}[{\bf X}]$ with $\bar{T}-T\neq \emptyset$ and $T-\bar{T}\neq \emptyset$. Given a monomial order $\prec$, we write $\bar{T}\prec T$, if
	\begin{displaymath} \abovedisplayskip=2pt \belowdisplayskip=2pt
	    \max_{\prec} (\bar{T}-T)\prec \max_{\prec} (T-\bar{T}).
	\end{displaymath}
	\end{definition}

\begin{definition}[$\prec$-minimal monomial basis \cite{SauerT1998}]
	    Given a monomial order $\prec$ and interpolation conditions $\Delta ={\rm span}\{L_1, L_2, \dots, L_n\}$, where $L_1, L_2, \dots, L_n$ are linearly independent functionals. Let $T$ be an interpolation monomial basis for $\Delta$, then $T$ is $\prec$-minimal if there exists no interpolation monomial basis $\bar{T}$ for $\Delta$ satisfying $\bar{T} \prec T$.
	\end{definition}

First, we consider the interpolation problem at the origin.

	\begin{lemma} \label{lem:InterpCondHasMon}
	    Given interpolation conditions $\Delta = \delta_{\bf 0} \circ {\rm span}\{P_1(D), P_2(D), \dots, P_n(D)\}$, where $P_1, P_2, \dots, P_n \in \mathbb{F}[{\bf X}]$ are linearly independent polynomials. Let $T=\{{\bf X}^{\boldsymbol{\beta_1}}, {\bf X}^{\boldsymbol{\beta_2}}, \dots, {\bf X}^{\boldsymbol{\beta_n}}\}$ be an interpolation monomial basis for $\Delta$, then for each $ P_i, 1 \le i \le n$, there exists an ${\bf X}^{\boldsymbol{\alpha_i}} \in \Lambda\{P_i\}$ satisfying ${\bf X}^{\boldsymbol{\alpha_i}} \in T$.
	\end{lemma}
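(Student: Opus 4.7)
The plan is to argue by contradiction, using the observation that the action of $\delta_{\bf 0}\circ P_i(D)$ on the monomial basis $T$ is essentially read off from the coefficients of $P_i$, scaled by a factorial. Concretely, I would first compute, for any multi-indices $\boldsymbol{\alpha},\boldsymbol{\beta}\in\mathbb{N}^d$,
\begin{displaymath}
    D^{\boldsymbol{\alpha}}{\bf X}^{\boldsymbol{\beta}}\Big|_{{\bf X}={\bf 0}} \;=\; \boldsymbol{\beta}!\,\delta_{\boldsymbol{\alpha},\boldsymbol{\beta}},
\end{displaymath}
from which it follows, by expanding $P_i(D)=\sum_{\boldsymbol{\alpha}}\widehat{P_i}(\boldsymbol{\alpha})D^{\boldsymbol{\alpha}}$ and applying $\delta_{\bf 0}$ termwise, that
\begin{displaymath}
    L_i({\bf X}^{\boldsymbol{\beta_j}})\;=\;\delta_{\bf 0}\circ P_i(D)({\bf X}^{\boldsymbol{\beta_j}})\;=\;\boldsymbol{\beta_j}!\,\widehat{P_i}(\boldsymbol{\beta_j})
\end{displaymath}
for $1\le i,j\le n$.

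Next, I would suppose for contradiction that there is some index $i_0\in\{1,\dots,n\}$ for which $\Lambda\{P_{i_0}\}\cap T=\emptyset$. By definition of $\Lambda$, this means $\widehat{P_{i_0}}(\boldsymbol{\beta_j})=0$ for every $j=1,\dots,n$, and therefore by the formula just derived, $L_{i_0}({\bf X}^{\boldsymbol{\beta_j}})=0$ for all $j$. Thus the $i_0$-th row of the Vandermonde-like matrix $\bigl(L_i({\bf X}^{\boldsymbol{\beta_j}})\bigr)_{i,j=1}^{n}$ is identically zero, which contradicts the assumption that $T$ is an interpolation monomial basis for $\Delta$ (equivalently, that this matrix is non-singular). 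The conclusion follows.

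There is no real obstacle here; the content of the proof is the factorial identity $D^{\boldsymbol{\alpha}}{\bf X}^{\boldsymbol{\beta}}|_{{\bf 0}}=\boldsymbol{\beta}!\,\delta_{\boldsymbol{\alpha},\boldsymbol{\beta}}$, which immediately identifies the Vandermonde-like matrix entries with scaled Taylor coefficients and reduces the statement to a one-line observation about zero rows. The only small point to be careful about is justifying termwise evaluation when $P_i$ is a genuine power series (not a polynomial): since $D^{\boldsymbol{\alpha}}{\bf X}^{\boldsymbol{\beta_j}}|_{{\bf 0}}$ vanishes for all but the single index $\boldsymbol{\alpha}=\boldsymbol{\beta_j}$, only finitely many terms of $P_i(D)$ contribute, so the evaluation is well defined without any convergence issues.
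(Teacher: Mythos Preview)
Your proof is correct and follows essentially the same approach as the paper: argue by contradiction, observe that $\delta_{\bf 0}\circ P_i(D)({\bf X}^{\boldsymbol{\beta_j}})$ picks out (a factorial multiple of) the coefficient $\widehat{P_i}(\boldsymbol{\beta_j})$, and conclude that if no monomial of some $P_{i_0}$ lies in $T$ then the corresponding row of the Vandermonde-like matrix vanishes. Your explicit formula $L_i({\bf X}^{\boldsymbol{\beta_j}})=\boldsymbol{\beta_j}!\,\widehat{P_i}(\boldsymbol{\beta_j})$ and the remark on well-definedness for genuine power series are slight refinements, but the argument is the same.
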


	\begin{proof}
	    We will prove this by contradiction. Without loss of generality, we can assume that for every ${\bf X}^{\boldsymbol{\alpha}} \in \Lambda\{P_1\}$, ${\bf X}^{\boldsymbol{\alpha}} \not\in T$. It is observed that
		\begin{displaymath}
		    \begin{aligned} \abovedisplayskip=2pt \belowdisplayskip=2pt
		        [\delta_{\bf 0} \circ P_1(D)] {\bf X}^{\boldsymbol{\beta_j}} & = [\delta_{\bf 0} \circ \sum \hat{P_1}(\boldsymbol{\alpha}) D^{\boldsymbol{\alpha}}] {\bf X}^{\boldsymbol{\beta_j}} \\
		           &= \sum \hat{P_1}(\boldsymbol{\alpha}) \underbrace{(\delta_{\bf 0} \circ D^{\boldsymbol{\alpha}} {\bf X}^{\boldsymbol{\beta_j}})}_{0} \\
				   &= 0, \quad 1 \le j \le n.
		    \end{aligned}
		\end{displaymath}
So the Vandermonde-like matrix has a zero row, and it is singular. It contradicts with the condition that $T=\{{\bf X}^{\boldsymbol{\beta_1}}, {\bf X}^{\boldsymbol{\beta_2}}, \dots, {\bf X}^{\boldsymbol{\beta_n}}\}$ is an interpolation monomial basis for $\Delta$.
	\end{proof}

	    Given interpolation conditions $\Delta = \delta_{\bf 0} \circ {\rm span}\{P_1(D), P_2(D), \dots, P_n(D)\}$, Lemma \ref{lem:InterpCondHasMon} shows that we need to choose at least one monomial from each  $P_i, 1 \le i \le n$ to form the interpolation monomial basis for $\Delta$.

	 For $\boldsymbol{\theta}=(\theta_1, \theta_2, \dots, \theta_d) \in \mathbb{F}^d$, we denote $\boldsymbol{\theta} {\bf X} :=\sum_{i=1}^d \theta_i x_i$. From Taylor's formula, we have
	\begin{displaymath}
		e^{\boldsymbol{\theta} {\bf X}} = \sum_{j=0}^\infty \frac{(\boldsymbol{\theta} {\bf X})^j}{j!}.
	\end{displaymath}
Then, we have

\begin{equation} \label{eqn:nonzeroCon}
	    \delta_{\boldsymbol{\theta}}=
		\delta_{\bf 0} \circ  e^{\boldsymbol{\theta} D}.
\end{equation}
Further more, we get
	\begin{equation} \label{eqn:nonzeroCons2zeroCons}
	    \delta_{\boldsymbol{\theta}} \circ P(D) =
		\delta_{\bf 0} \circ  e^{\boldsymbol{\theta} D} P(D).
	\end{equation}
	This means that an interpolation problem at a nonzero point can be converted into one at the origin.

\begin{theorem}\label{thm:6}
	    For any $\boldsymbol{\theta} \in \mathbb{F}^d$, given a monomial order $\prec$ and interpolation conditions $\Delta =\delta_{\boldsymbol{\theta}} \circ {\rm span}\{P_1(D), P_2(D), \dots, P_n(D)\}$. If $\{P_1, P_2, \dots, P_n\}$ is a ``reverse" reduced team w.r.t. $\prec$, then $\{{\rm lm}(P_1), {\rm lm}(P_2), \dots, {\rm lm}(P_n)\}$ is the $\prec$-minimal monomial basis for $\Delta$.
	\end{theorem}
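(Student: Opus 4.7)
The plan is to verify two requirements: first, that $T := \{\mathrm{lm}(P_1), \ldots, \mathrm{lm}(P_n)\}$ is an interpolation monomial basis for $\Delta$; second, that no alternative basis $\bar{T}$ satisfies $\bar{T} \prec T$. After relabeling I may assume $\mathrm{lm}(P_1) \prec \mathrm{lm}(P_2) \prec \cdots \prec \mathrm{lm}(P_n)$, and write $\mathrm{lm}(P_i) = {\bf X}^{\boldsymbol{\beta_i}}$.

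For the first part, I would use (\ref{eqn:nonzeroCons2zeroCons}) to rewrite $L_i := \delta_{\boldsymbol{\theta}} \circ P_i(D)$ as $\delta_{\bf 0} \circ Q_i(D)$, where $Q_i := e^{\boldsymbol{\theta} {\bf X}} P_i$ is viewed as a formal power series. A direct computation then gives $L_i({\bf X}^{\boldsymbol{\beta}}) = \widehat{Q_i}(\boldsymbol{\beta})\,\boldsymbol{\beta}!$, and expanding the product yields
\[
\widehat{Q_i}(\boldsymbol{\beta}) \;=\; \sum_{\boldsymbol{\alpha} \leq \boldsymbol{\beta}} \frac{\boldsymbol{\theta}^{\boldsymbol{\beta}-\boldsymbol{\alpha}}}{(\boldsymbol{\beta}-\boldsymbol{\alpha})!}\,\widehat{P_i}(\boldsymbol{\alpha}),
\]
where $\boldsymbol{\alpha} \leq \boldsymbol{\beta}$ is componentwise. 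Any nonzero term requires both ${\bf X}^{\boldsymbol{\alpha}} \mid {\bf X}^{\boldsymbol{\beta}}$ (hence ${\bf X}^{\boldsymbol{\alpha}} \preceq {\bf X}^{\boldsymbol{\beta}}$) and ${\bf X}^{\boldsymbol{\alpha}} \in \Lambda\{P_i\}$ (hence ${\bf X}^{\boldsymbol{\alpha}} \succeq \mathrm{lm}(P_i)$ because $\mathrm{lm}$ is the least monomial of $P_i$). Consequently $\widehat{Q_i}(\boldsymbol{\beta}) = 0$ whenever ${\bf X}^{\boldsymbol{\beta}} \prec \mathrm{lm}(P_i)$, while $\widehat{Q_i}(\boldsymbol{\beta_i}) = \widehat{P_i}(\boldsymbol{\beta_i}) = 1$ because the sole surviving term is $\boldsymbol{\alpha} = \boldsymbol{\beta_i}$ and the leading coefficient of a reverse reduced team is $1$. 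With the chosen ordering, the Vandermonde-like matrix $\bigl(L_i({\bf X}^{\boldsymbol{\beta_j}})\bigr)_{i,j}$ is therefore upper triangular with unit diagonal (up to the invertible diagonal factor $\mathrm{diag}(\boldsymbol{\beta_j}!)$), hence nonsingular, so $T$ is indeed an interpolation monomial basis.

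For the second part, I would argue by contradiction. Suppose $\bar{T}$ is an interpolation monomial basis with $\bar{T} \prec T$, and set $\boldsymbol{\mu} := \max_{\prec}(T - \bar{T}) = \mathrm{lm}(P_k)$ for the appropriate $k$. The inequality $\max_{\prec}(\bar{T} - T) \prec \boldsymbol{\mu}$ forces every $\mathrm{lm}(P_i)$ with $i > k$ to lie in $\bar{T}$ (otherwise it would enlarge $\max_{\prec}(T-\bar{T})$), so the remaining $k$ monomials of $\bar{T}$ are all strictly $\prec \boldsymbol{\mu}$. Reordering the columns of the Vandermonde-like matrix for $\bar{T}$ so that these $k$ small monomials come first, the vanishing identity established above kills every entry in the lower-left $(n-k+1) \times k$ block: each such entry involves some $L_i$ with $i \geq k$ (hence $\mathrm{lm}(P_i) \succeq \boldsymbol{\mu}$) evaluated at a monomial strictly $\prec \boldsymbol{\mu}$. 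The bottom $n-k+1$ rows are therefore supported only in the last $n-k$ columns, so the rank is at most $(k-1) + (n-k) = n-1$, contradicting nonsingularity.

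The main obstacle is spotting the key monotonicity at the heart of the first step: that multiplication by $e^{\boldsymbol{\theta} {\bf X}}$ cannot manufacture coefficients at exponents strictly below $\mathrm{lm}(P_i)$, because such exponents would have to divide the target exponent while simultaneously exceeding it in the monomial order. Once this stability of the triangular structure under conjugation by $e^{\boldsymbol{\theta} D}$ is in hand, both the nonsingularity at $T$ and the zero-block for any purportedly smaller $\bar{T}$ come from the same identity.
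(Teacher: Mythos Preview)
Your proof is correct. The first half---establishing that $T$ is an interpolation monomial basis by showing the Vandermonde-like matrix is upper triangular with nonzero diagonal---matches the paper's argument, though your explicit coefficient identity $\widehat{Q_i}(\boldsymbol{\beta}) = \sum_{\boldsymbol{\alpha} \le \boldsymbol{\beta}} \frac{\boldsymbol{\theta}^{\boldsymbol{\beta}-\boldsymbol{\alpha}}}{(\boldsymbol{\beta}-\boldsymbol{\alpha})!}\widehat{P_i}(\boldsymbol{\alpha})$ makes the vanishing mechanism clearer than the paper's statement that ${\rm lm}(P_i) \notin \Lambda\{e^{\boldsymbol{\theta}{\bf X}}P_j\}$ for $i<j$.

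Where you genuinely diverge is in the minimality argument. The paper invokes Lemma~\ref{lem:InterpCondHasMon} (any interpolation monomial basis must meet each $\Lambda\{q_i\}$) and then asserts ``it is easy to see $\{{\rm lm}(q_1),\dots,{\rm lm}(q_n)\}$ is the minimal choice w.r.t.\ $\prec$.'' Your route is a direct rank-deficiency contradiction: if $\bar{T} \prec T$, you isolate the pivot $\boldsymbol{\mu}=\max_{\prec}(T-\bar{T})={\rm lm}(P_k)$, observe that the $n-k+1$ functionals $L_k,\dots,L_n$ all annihilate the $k$ monomials of $\bar{T}$ lying strictly below $\boldsymbol{\mu}$, and read off rank at most $n-1$. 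This is more self-contained (it does not use Lemma~\ref{lem:InterpCondHasMon}) and arguably more rigorous, since the paper's ``minimal choice'' step glosses over why the monomials picked from the different $\Lambda\{q_i\}$ must be distinct and why choosing the least one from each is optimal for the $\max$-of-symmetric-difference ordering. The price you pay is a slightly longer case analysis; the benefit is that both halves of your proof rest on the single vanishing identity $L_i({\bf X}^{\boldsymbol{\beta}})=0$ for ${\bf X}^{\boldsymbol{\beta}} \prec {\rm lm}(P_i)$, which unifies the argument nicely.
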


	\begin{proof}
     Let ${\tilde {P}_i}(D) = {e^{\boldsymbol{\theta} D}} P_i(D), 1 \le i \le n$.	By (\ref{eqn:nonzeroCons2zeroCons}), we have
     $$\Delta =\delta_{\boldsymbol{\theta}} \circ {\rm span}\{P_1(D), P_2(D), \dots, P_n(D)\}=\delta_{\bf 0} \circ {\rm span}\{\tilde {P}_1(D), \tilde {P}_2(D), \dots, \tilde {P}_n(D)\}.$$
According to Lemma \ref{lem:InterpCondHasMon}, we choose at least one monomial from each $\tilde {P}_i, 1 \le i \le n$ to form the interpolation monomial basis. On the other hand, $P_i, 1 \le i \le n$ are assumed to be linearly independent,  which implies that the cardinal number of interpolation monomial basis is $n$. Thus, it is easy to see $\{{\rm lm}(\tilde {P}_1), {\rm lm}(\tilde {P}_2), \dots, {\rm lm}(\tilde {P}_n)\}$ is minimal choice w.r.t. $\prec$. We only need to prove $\{{\rm lm}(\tilde {P}_1), {\rm lm}(\tilde {P}_2), \dots, {\rm lm}(\tilde {P}_n)\}$ is an interpolation monomial basis for $\Delta$.

		Let $P_i =\sum \hat{P_i}(\boldsymbol{\alpha}) {\bf X}^{\boldsymbol{\alpha}} +  {\bf X}^{\boldsymbol{\beta_i}}$, ${\rm lm}(P_i) = {\bf X}^{\boldsymbol{\beta_i}}, 1 \le i \le n$. Since $\{P_1, P_2, \dots, P_n\}$ is a ``reverse" reduced team w.r.t. $\prec$, it means
		\begin{displaymath}
		    {\rm lm}(P_i) \not\in \Lambda \{P_j\}, \quad i\neq j, 1 \le i , j \le n.
		\end{displaymath}	
Without loss of generality, we can assume that ${\rm lm}(P_1)\prec {\rm lm}(P_2)\prec \dots \prec{\rm lm}(P_n)$, then we have
\begin{displaymath}
		    {\rm lm}(P_i) \not\in \Lambda \{\tilde {P}_j\}, \quad  1 \le i < j \le n.
		\end{displaymath}
Notice that ${\rm lm}(\tilde {P}_i)={\rm lm}( e^{\boldsymbol{\theta} {\bf X}} \cdot P_i)={\rm lm}( e^{\boldsymbol{\theta} {\bf X}}) \cdot {\rm lm}(P_i)={\rm lm}(P_i), 1 \le i \le n$, we have
\begin{displaymath}
		    {\rm lm}(\tilde {P}_i) \not\in \Lambda \{\tilde {P}_j\}, \quad  1 \le i < j \le n.
		\end{displaymath}
Thus, we have

     \begin{displaymath}
		    (\delta_{\bf 0} \circ \tilde {P}_j(D)) ({\rm lm} (\tilde {P}_i)) =
			\begin{cases}
			   0, & i< j,  \\
			    \boldsymbol{\beta_i} !  \neq 0, & i=j,  \\
			\end{cases} \quad 1 \le i, j \le n.
		\end{displaymath}
 So the Vandermonde-like matrix is an upper triangular matrix with nonzero diagonal elements, i.e., it is non-singular. It follows that
 $$\{{\rm lm}(\tilde {P}_1), {\rm lm}(\tilde {P}_2), \dots, {\rm lm}(\tilde {P}_n)\}=\{{\rm lm}(P_1), {\rm lm}(P_2), \dots, {\rm lm}(P_n)\}$$
 is the $\prec$-minimal monomial basis for $\Delta$.
	\end{proof}

In ideal interpolation,  the $\prec$-minimal monomial basis is equivalent to the monomial basis of the quotient ring w.r.t. $\prec$ \cite{SauerT1998}. The follwing theorem can be obtained directly by Theorem \ref{thm:6}, we list it  here without proof.

\begin{theorem} \label{thm:maintheorem}
	   For any $\boldsymbol{\theta} \in \mathbb{F}^d$, given a monomial order $\prec$ and interpolation conditions $\Delta =\delta_{\boldsymbol{\theta}} \circ {\rm span}\{P_1(D), P_2(D), \dots, P_n(D)\}$. If $\{P_1, P_2, \dots, P_n\}$ is a ``reverse" reduced team w.r.t. $\prec$, then $\{{\rm lm}(P_1), {\rm lm}(P_2), \dots, {\rm lm}(P_n)\}$  is the monomial basis of the quotient ring $\mathbb{F}[{\bf X}]/ I(\Delta)$ w.r.t. $\prec$.
	\end{theorem}

Now, we show the application of the Theorem \ref{thm:maintheorem}.
For example, given the monomial order $\mathrm{grlex}(y \prec x)$ and ideal interpolation conditions $\Delta =\delta_{(1, 2)} \circ {\rm span}\{1, D_x, \frac{1}{2}D_x^2+D_y, \frac{1}{6}D_x^3+D_xD_y+2D_y\}$. We know $P_1=1$, $P_2=x$,$P_3=\frac{1}{2}x^2+y$, $P_4=\frac{1}{6}x^3+xy+2y$. By Algorithm \ref{alg:revredbasis}, we have ${P_1}^*=1$, ~~${P_2}^*=\frac{1}{2}x^2+y$,~~ ${P_3}^*=x$, ~~${P_4}^*=\frac{1}{6}x^3-x^2+xy$. By Theorem \ref{thm:maintheorem}, $\{{\rm lm}(P_1^*), {\rm lm}(P_2^*), {\rm lm}(P_3^*), {\rm lm}(P_4^*)\}=\{1,y,x,xy\}$ is the monomial basis of the quotient ring $\mathbb{F}[{\bf X}]/ I(\Delta)$.


\section{The algorithms to compute the reduced Gr\"{o}bner bases} \label{sec:5}

%
%
%
%
%

In order to describe algorithms more conveniently, we introduce some notations.
Let $\mathbb{F}[[{\bf X}]]$ be the ring of formal power series.
Let $T$ be a set of monomials in $ \mathbb{F}[{\bf X}]$.
For any $f \in \mathbb{F}[[{\bf X}]]$, we denote by
\begin{displaymath}
		\lambda_T(f)= \sum\limits_{{\bf X}^{\boldsymbol{\alpha}}\in T} {\hat f(\boldsymbol{\alpha}){\bf X}^{\boldsymbol{\alpha}}}
	\end{displaymath}
a ``truncated polynomial".

Given a monomial order $\prec$ and Lagrange interpolation conditions $\Delta$, Algorithm \ref{alg:Groebnerbasis} yields the reduced Gr\"{o}bner basis for $I(\Delta)$ w.r.t. $\prec$.

\begin{algorithm}
	\caption{The reduced Gr\"{o}bner basis (Lagrange interpolation)}
	\label{alg:Groebnerbasis}
	\begin{algorithmic}[1]
	\STATE{{\bf Input:} A monomial order $\prec$.}
	\STATE{\hspace*{1.1cm}The interpolation conditions $\Delta ={\rm span}\{\delta_{\boldsymbol{\theta_1}}, \delta_{\boldsymbol{\theta_2}}, \dots, \delta_{\boldsymbol{\theta_n}}\}$,}
    \STATE{\hspace*{1.1cm}where distinct points $\boldsymbol{\theta_i} \in \mathbb{F}^d, i=1,2,\dots ,n$.}

	\STATE{{\bf Output:} $\{G_1,G_2,\dots, G_m\}$, the reduced Gr\"{o}bner basis for $I(\Delta)$ w.r.t. $\prec$.}	
	\STATE{//Initialization}
	
\STATE{$\rm{List}:=\Lambda\{e^{\boldsymbol{\theta_1} {\bf X}},e^{\boldsymbol{\theta_2} {\bf X}},\dots, e^{\boldsymbol{\theta_n} {\bf X}}\}$,~~$Q:=\{1\}$,~~$G:=\emptyset$;}
 \STATE{${\bf X}^{\boldsymbol{\beta}}:= \rm{min} (\rm{List},\prec)$};

	\STATE{$\rm{U}:=(\hat{e^{\boldsymbol{\theta_1} {\bf X}}}(\boldsymbol{\beta}),\hat{e^{\boldsymbol{\theta_2} {\bf X}}}(\boldsymbol{\beta}),\dots, \hat{e^{\boldsymbol{\theta_n} {\bf X}}}(\boldsymbol{\beta}))'$;}
    \STATE{$\rm{List}:=\rm{List}-\{{\bf X}^{\boldsymbol{\beta}}\}$};

\STATE{//Computing}
  \WHILE{$\rm{rank}(U)< n$}
 \STATE{${\bf X}^{\boldsymbol{\beta}}:= \rm{min} (\rm{List},\prec)$;}
  \STATE{$v:=(\hat{e^{\boldsymbol{\theta_1} {\bf X}}}(\boldsymbol{\beta}),\hat{e^{\boldsymbol{\theta_2} {\bf X}}}(\boldsymbol{\beta}),\dots, \hat{e^{\boldsymbol{\theta_n} {\bf X}}}(\boldsymbol{\beta}))'$;}

  \IF{${\rm{rank}}({\rm{U}},v)>{\rm{rank}}(\rm{U})$}
\STATE{${\rm{U}}:=[{\rm{U}},v]$;}
\STATE{$Q:=Q\cup \{{\bf X}^{\boldsymbol{\beta}}\}$};
 \STATE{$\rm{List}:=\rm{List}-\{{\bf X}^{\boldsymbol{\beta}}\}$};
\ELSE{}
\STATE{$G:=G\cup \{{\bf X}^{\boldsymbol{\beta}}\}$};
\STATE{$\rm{List}:=\rm{List}-\{{\rm{multiples~ of~}}{\bf X}^{\boldsymbol{\beta}}\}$};
\ENDIF
  \ENDWHILE
\STATE{${\bf X}^{\boldsymbol{\alpha}}:= \rm{min} (\rm{List},\prec)$};
\STATE{$G:=G\cup \{{\bf X}^{\boldsymbol{\alpha}}\}$};
\STATE{$G:=\{{\bf X}^{\boldsymbol{\alpha_1}}, {\bf X}^{\boldsymbol{\alpha_2}}, \dots, {\bf X}^{\boldsymbol{\alpha_m}}\}$, the set of the leading monomials of the reduced Gr\"{o}bner basis};
\STATE{$Q:=\{{\bf X}^{\boldsymbol{\beta_1}}, {\bf X}^{\boldsymbol{\beta_2}}, \dots, {\bf X}^{\boldsymbol{\beta_n}}\}$, the monomial basis of the quotient ring;}

    \STATE{$P_j:=\lambda_{G\cup Q}(e^{\boldsymbol{\theta_j} {\bf X}}),~1\leq j\leq n$;}

\STATE{$\{P_j^{*}, 1\leq j\leq n\}$, a ``reverse" reduced team  w.r.t. $\prec$, by Algorithm \ref{alg:revredbasis};}

\FOR{$i=1:m$}
            \STATE{$G_i={\bf X}^{\boldsymbol{\alpha_i}}-\sum_{j=1}^n\left(\frac{(\boldsymbol{\alpha_i})!}{(\boldsymbol{\beta_j})!}\hat{P}_j^{*}(\boldsymbol{\alpha_i})\right) {\bf X}^{\boldsymbol{\beta_j}}$;}
	\ENDFOR
	\RETURN $\{G_1,G_2,\dots, G_m\}$.
	\end{algorithmic}
	\end{algorithm}

In Line 11 and Line 14, we use the skill(recording the reversible matrix used for each calculation) in MMM algorithm to calculate the rank of the matrix by Gaussian elimination. It is obvious that Algorithm \ref{alg:Groebnerbasis} terminates. The following theorem shows its correctness.

\begin{theorem}\label{thm:lagrangeG-B}

The output $\{G_1,G_2,\dots, G_m\}$ in Algorithm \ref{alg:Groebnerbasis} is the reduced Gr\"{o}bner basis for  $I(\Delta)$.

\end{theorem}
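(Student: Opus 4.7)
The plan is to verify four points in order: (i) the leading monomials of the reverse reduced team $\{\textsl{q}_j^*\}$ coincide with $Q$, so that $\{{\bf X}^{\boldsymbol{\beta_j}}\}=Q$; (ii) a duality identity $\delta_{\boldsymbol{\theta}}\circ \tilde{P}_k(D){\bf X}^{\boldsymbol{\beta_j}} = \delta_{jk}$ for an appropriate basis $\{\tilde{P}_k\}$ of ${\rm span}\{P_1,\dots,P_n\}$; (iii) each $G_i$ lies in $I(\Delta)$ and has leading monomial ${\bf X}^{\boldsymbol{\alpha_i}}$; (iv) these assemble into the reduced Gr\"obner basis via Corollary~\ref{thm:maintheorem}.

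The cornerstone identity is $\widehat{\textsl{q}_j}(\boldsymbol{\alpha}) = \delta_{\boldsymbol{\theta}}\circ P_j(D){\bf X}^{\boldsymbol{\alpha}}$ for every ${\bf X}^{\boldsymbol{\alpha}}\in G\cup Q$, obtained by direct expansion of $\lambda_{G\cup Q}(e^{\boldsymbol{\theta}{\bf X}}P_j)$ combined with identity~(\ref{eqn:nonzeroCons2zeroCons}). Using the proof of Theorem~\ref{thm:6} I would then note that ${\rm lm}(\textsl{q}_j)={\rm lm}(P_j)\in Q$, and that after sorting the rows of the Vandermonde-like matrix $U$ by leading monomial the $Q$-column block is triangular with nonzero diagonal; starting from the leftmost column, the row reduction in Algorithm~\ref{alg:revredbasis} can only pivot on the ${\rm lm}(P_j)$-columns, because every row has zeros strictly to the left of its own leading column. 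This establishes (i): ${\rm lm}(\textsl{q}_j^*)={\rm lm}(P_j)$ and $\{{\bf X}^{\boldsymbol{\beta_j}}\}=Q$. For (ii), the row operations performed in Algorithm~\ref{alg:revredbasis} amount to an invertible change of basis $\tilde{P}_k := \sum_l c_{kl}P_l$ on ${\rm span}\{P_l\}$, and the cornerstone identity transports across to give $\widehat{\textsl{q}_k^*}(\boldsymbol{\alpha}) = \delta_{\boldsymbol{\theta}}\circ\tilde{P}_k(D){\bf X}^{\boldsymbol{\alpha}}$ on $G\cup Q$; the reverse reduced team property on $\{\textsl{q}_j^*\}$ then says $\widehat{\textsl{q}_k^*}(\boldsymbol{\beta_j}) = \delta_{jk}$, yielding the duality.

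For (iii), applying $\delta_{\boldsymbol{\theta}}\circ\tilde{P}_k(D)$ term by term to the formula defining $G_i$ collapses the sum via the duality identity, matching $\widehat{\textsl{q}_k^*}(\boldsymbol{\alpha_i})$ exactly and so producing zero; since $\{\delta_{\boldsymbol{\theta}}\circ\tilde{P}_k(D)\}_{k=1}^n$ spans $\Delta$, this gives $G_i\in I(\Delta)$. To see that ${\rm lm}(G_i)={\bf X}^{\boldsymbol{\alpha_i}}$, observe that any non-leading monomial ${\bf X}^{\boldsymbol{\beta_j}}$ appearing in $G_i$ with a nonzero coefficient forces $\widehat{\textsl{q}_j^*}(\boldsymbol{\alpha_i})\neq 0$, and since $\boldsymbol{\beta_j}={\rm lm}(\textsl{q}_j^*)$ is the $\prec$-minimum of the support of $\textsl{q}_j^*$, we must have ${\bf X}^{\boldsymbol{\beta_j}}\preceq{\bf X}^{\boldsymbol{\alpha_i}}$; equality would demand $\boldsymbol{\alpha_i}=\boldsymbol{\beta_j}$, impossible because $\boldsymbol{\beta_j}\in Q$ while $\boldsymbol{\alpha_i}\notin Q$. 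Combining with Corollary~\ref{thm:maintheorem}, which identifies the border set $\{{\bf X}^{\boldsymbol{\alpha_i}}\}$ as the minimal generators of the initial ideal of $I(\Delta)$, gives that $\{G_1,\dots,G_m\}$ is a Gr\"obner basis. Reducedness is then immediate: the leading coefficient is $1$ by construction, and all other terms of $G_i$ lie in $Q$, hence are not divisible by any ${\bf X}^{\boldsymbol{\alpha_l}}$.

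The principal obstacle is step~(i): ruling out the possibility that the row reduction might pivot on some $G$-column interleaved with the $Q$-columns under the monomial order. Non-singularity of the $Q$-submatrix alone does not suffice --- one has to invoke the sharper triangular structure from Theorem~\ref{thm:6}, in the form ${\rm lm}(\textsl{q}_j)={\rm lm}(P_j)\in Q$, which forces every row of $U$ to vanish strictly to the left of its designated $Q$-column and therefore pins the pivot columns to $Q$ exactly. Once this is secured, the remaining steps are essentially bookkeeping with the duality pairing.
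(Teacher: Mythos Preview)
Your plan is sound and parallels the paper's argument, and in two respects it is more careful: you prove explicitly that the pivot columns of the reduced row echelon form are exactly $Q$ (your step~(i)), and you verify that ${\rm lm}(G_i)={\bf X}^{\boldsymbol{\alpha_i}}$. The paper leaves both points implicit, writing only ``we only need to prove that $G_i$ lies in $I(\Delta)$''; your triangular argument for (i) is correct and is genuinely needed for a rigorous proof that the output is reduced and has the right leading terms.

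There is, however, a factorial mismatch in step~(iii). With your cornerstone identity $\widehat{\textsl{q}_k^{*}}(\boldsymbol{\alpha})=\delta_{\boldsymbol{\theta}}\circ\tilde P_k(D){\bf X}^{\boldsymbol{\alpha}}$ and duality $\delta_{\boldsymbol{\theta}}\circ\tilde P_k(D){\bf X}^{\boldsymbol{\beta_j}}=\delta_{jk}$, applying $\delta_{\boldsymbol{\theta}}\circ\tilde P_k(D)$ term by term to $G_i$ gives
\[
\widehat{\textsl{q}_k^{*}}(\boldsymbol{\alpha_i})\;-\;\frac{(\boldsymbol{\alpha_i})!}{(\boldsymbol{\beta_k})!}\,\widehat{\textsl{q}_k^{*}}(\boldsymbol{\alpha_i}),
\]
which is generally nonzero. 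The paper avoids this by applying the functional $\delta_{\bf 0}\circ \textsl{q}_k^{*}(D)$ rather than $\delta_{\boldsymbol{\theta}}\circ\tilde P_k(D)$: since $\delta_{\bf 0}\circ \textsl{q}_k^{*}(D){\bf X}^{\boldsymbol{\alpha}}=\boldsymbol{\alpha}!\,\widehat{\textsl{q}_k^{*}}(\boldsymbol{\alpha})$, the extra factorial cancels the ratio $(\boldsymbol{\alpha_i})!/(\boldsymbol{\beta_j})!$ in $G_i$ and the total vanishes; the paper then lifts $\textsl{q}_k^{*}\to\textsl{q}_k\to e^{\boldsymbol{\theta}{\bf X}}P_k$ by linearity and the support inclusion $\Lambda\{G_i\}\subset G\cup Q$, arriving at $\delta_{\boldsymbol{\theta}}\circ P_k(D)G_i=0$. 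Your direct route works once the cornerstone identity is corrected to $\delta_{\boldsymbol{\theta}}\circ P_j(D){\bf X}^{\boldsymbol{\alpha}}=\boldsymbol{\alpha}!\,\widehat{\textsl{q}_j}(\boldsymbol{\alpha})$ (equivalently, the duality to $\delta_{\boldsymbol{\theta}}\circ\tilde P_k(D){\bf X}^{\boldsymbol{\beta_j}}=(\boldsymbol{\beta_j})!\,\delta_{jk}$); this is what one obtains when $\lambda_T$ is read as plain truncation, consistent with the paper's worked examples and with the support argument in its own proof.
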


\begin{proof} \rm
By (\ref{eqn:nonzeroCon}), $\Delta ={\rm span}\{\delta_{\boldsymbol{\theta_1}}, \delta_{\boldsymbol{\theta_2}}, \dots, \delta_{\boldsymbol{\theta_n}}\}=\delta_{\bf 0}\circ  {\rm span}\{e^{\boldsymbol{\theta_1} D}, e^{\boldsymbol{\theta_2} D}, \dots,  e^{\boldsymbol{\theta_n} D}\}$.
Suppose that $\{(e^{\boldsymbol{\theta_1} {\bf X}})^{*},(e^{\boldsymbol{\theta_2} {\bf X}})^{*},\dots, (e^{\boldsymbol{\theta_n} {\bf X}})^{*}\}$ is a ``reverse" reduced team of $e^{\boldsymbol{\theta_1} {\bf X}},e^{\boldsymbol{\theta_2} {\bf X}},\dots, e^{\boldsymbol{\theta_n} {\bf X}}$. Comparing Line 13 in Algorithm \ref{alg:Groebnerbasis} and  Line 11 in Algorithm \ref{alg:revredbasis}, we have $Q=\{{\bf X}^{\boldsymbol{\beta_1}}, {\bf X}^{\boldsymbol{\beta_2}}, \dots, {\bf X}^{\boldsymbol{\beta_n}}\}=\{{\rm lm}(e^{\boldsymbol{\theta_1} {\bf X}})^{*},{\rm lm}(e^{\boldsymbol{\theta_2} {\bf X}})^{*},\dots, {\rm lm}(e^{\boldsymbol{\theta_n} {\bf X}})^{*}\}$.
By Theorem \ref{thm:maintheorem}, $Q$ is the monomial basis of the quotient ring $\mathbb{F}[{\bf X}]/ I(\Delta)$. On the other hand, since $Q$ is a lower set \cite{Cox2007}, it is easy to check that $G$ is the set of the leading monomials of the reduced Gr\"{o}bner basis.
Thus, we only need to prove that $G_i={\bf X}^{\boldsymbol{\alpha_i}}-\sum_{j=1}^n\left(\frac{(\boldsymbol{\alpha_i})!}{(\boldsymbol{\beta_j})!}\hat{P}_j^{*}(\boldsymbol{\alpha_i})\right) {\bf X}^{\boldsymbol{\beta_j}}$ in Line 30 lies in $I(\Delta)$. Due to $\{P_1^{*}, P_2^{*}, \dots, P_n^{*}\}$ in Line 28 is a ``reverse" reduced team, it follows that ${\rm lm}(P_j^{*})={\bf X}^{\boldsymbol{\beta_j}} \not\in \Lambda\{P_k^{*}\}, j\neq k, 1 \le j , k \le n$. Hence, we have
$$\delta_{\bf 0} \circ P_k^{*}(D)G_i=\delta_{\bf 0} \circ P_k^{*}(D)({\bf X}^{\boldsymbol{\alpha_i}}-\sum_{j=1}^n\left(\frac{(\boldsymbol{\alpha_i})!}{(\boldsymbol{\beta_j})!}\hat{P}_j^{*}(\boldsymbol{\alpha_i})\right) {\bf X}^{\boldsymbol{\beta_j}})$$
$$=\delta_{\bf 0} \circ P_k^{*}(D){\bf X}^{\boldsymbol{\alpha_i}}-\delta_{\bf 0} \circ P_k^{*}(D)\left(\frac{(\boldsymbol{\alpha_i})!}{(\boldsymbol{\beta_k})!}\hat{P}_k^{*}(\boldsymbol{\alpha_i})\right) {\bf X}^{\boldsymbol{\beta_k}}$$
$$=(\boldsymbol{\alpha_i})!\hat{P}_k^{*}(\boldsymbol{\alpha_i})-\left(\frac{(\boldsymbol{\alpha_i})!}{(\boldsymbol{\beta_k})!}\hat{P}_k^{*}(\boldsymbol{\alpha_i})\right)(\boldsymbol{\beta_k})! $$
$$=0, ~~~~~1\leq k\leq n, 1\leq i\leq m.$$
Since $\{P_{k}, 1 \le k \le n \}$  can be expressed linearly by $\{P_{k}^{*}, 1 \le k \le n \}$, it follows that
$$\delta_{\bf 0} \circ P_k(D)G_i=0, ~~~~~1\leq k \leq n,~~ 1\leq i\leq m,$$
i.e.,
$$\delta_{\bf 0} \circ \lambda_{G\cup Q}(e^{\boldsymbol{\theta_k} {D}})G_i=0, ~~~~~1\leq k \leq n, ~~1\leq i\leq m.$$
Notice that $(\Lambda\{e^{\boldsymbol{\theta_k} {\bf X}}\}-(G\cup Q)) \cap \Lambda\{G_i\}=\emptyset$, it is easy to see
$$\delta_{\bf 0} \circ e^{\boldsymbol{\theta_k} {D}}G_i=0, ~~~~~1\leq k \leq n,~~ 1\leq i\leq m.$$
By (\ref{eqn:nonzeroCon}), we have
$$\delta_{\bf \boldsymbol{\theta_k}} G_i=0, ~~~~~1\leq k \leq n, ~~1\leq i\leq m.$$
So $G_i, 1\leq i\leq m$ vanish at $\boldsymbol{\theta_k}, 1\leq k \leq n$. It follows that $G_i, 1\leq i\leq m$ lies in $I(\Delta)$, the proof is completed.
\end{proof}

Line 30 in Algorithm \ref{alg:Groebnerbasis} shows that the ``reverse" reduced team provides all the information needed to construct the reduced Gr\"{o}bner basis. Since we use the skill in MMM algorithm, Algorithm \ref{alg:Groebnerbasis} also has a polynomial time complexity.

\begin{example}\label{example:1}
(Lagrange interpolation)
 Given the monomial order $\mathrm{grlex}(y \prec x)$, consider the  bivariate Lagrange interpolation with the interpolation conditions
		\begin{displaymath}
			   \begin{aligned}
          \Delta= {\rm span}\{\delta_{(0, 0)}, \delta_{(1, 2)}, \delta_{(2, 1)}\}.
					\end{aligned}
		\end{displaymath}			

\end{example}

		
		\begin{figure}[htbp!] \setlength{\abovecaptionskip}{0pt} \setlength{\belowcaptionskip}{0pt}		
		 \centering
		 \subfigure[Interpolation points]{
   \includegraphics[width=0.35\textwidth]{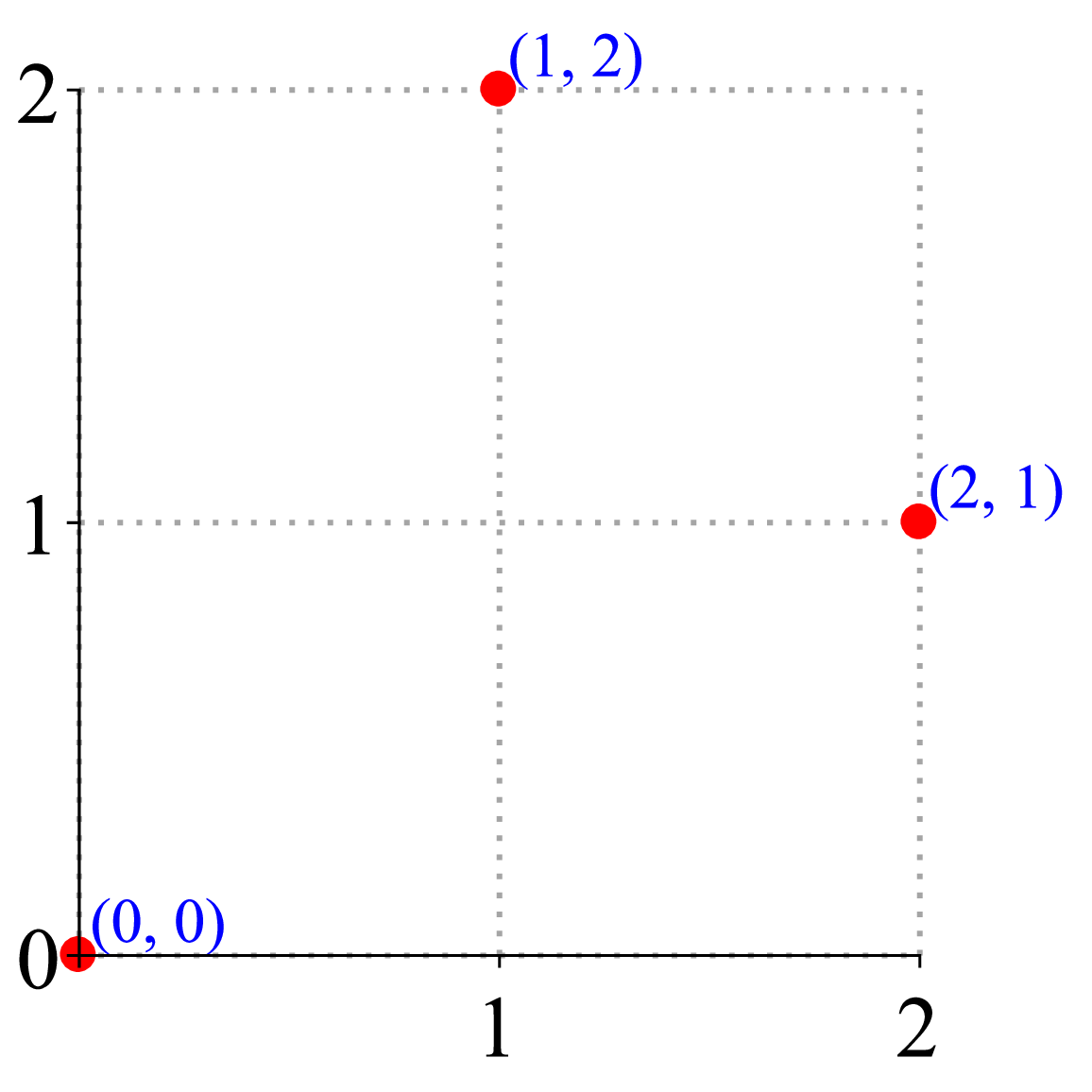}
		  \label{fig:subfig:Lag1}
		 }
\hspace{0.1\textwidth}
		  \subfigure[Quotient ring basis]{
		  \includegraphics[width=0.35\textwidth]{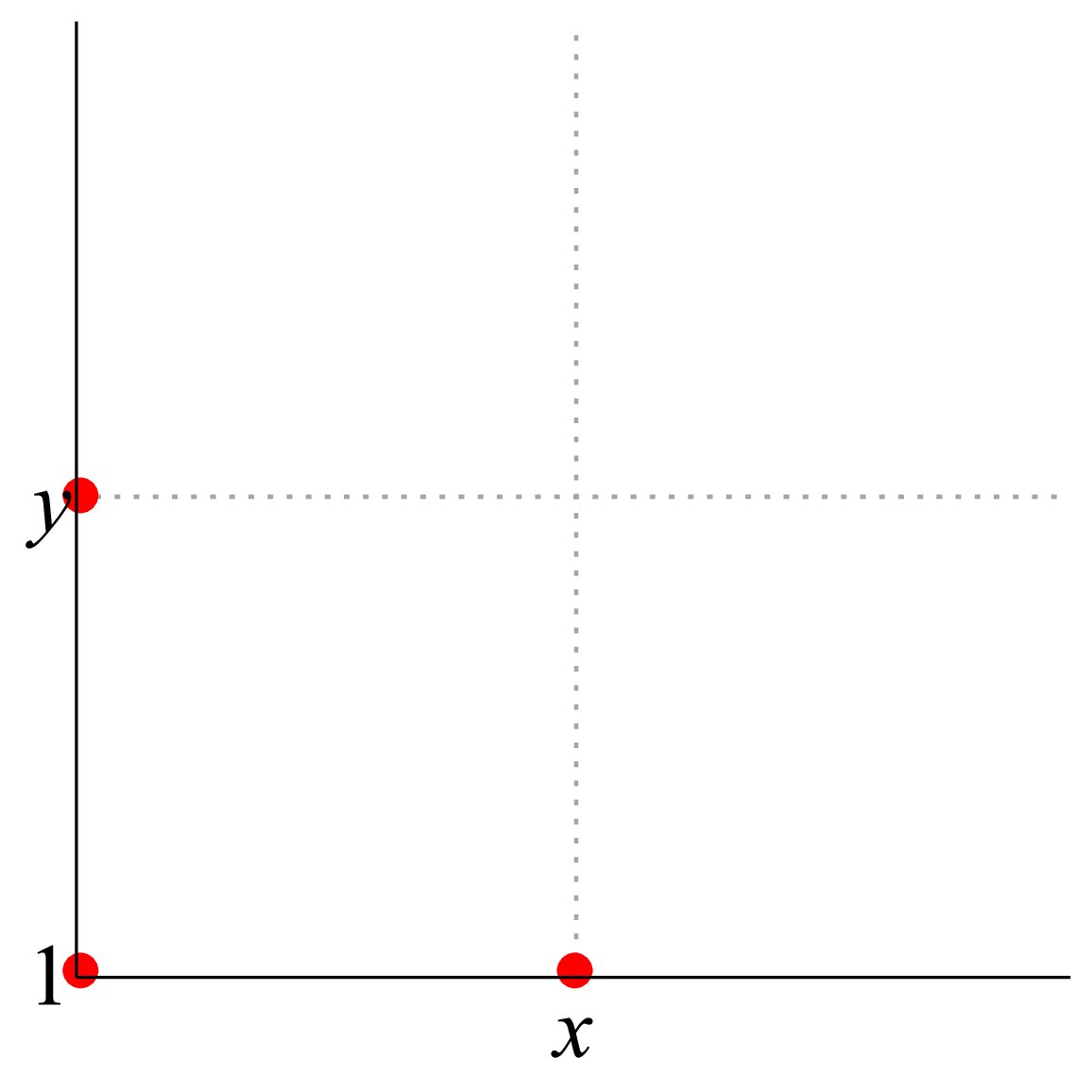}
		  \label{fig:subfig::Lag1}
		 }
		 \caption{Lagrange interpolation}
		 \label{fig:lag}
		\end{figure}
	

 By Algorithm \ref{alg:Groebnerbasis}, we get
   $$Q=\{1,y,x\},$$
   $$G=\{y^2,xy,x^2\},$$
			\begin{displaymath}
				\begin{aligned}
				    \{P_1, P_2, P_3\}  & = \{\lambda_{G\cup Q}(e^{(0, 0){\bf X}}),  \lambda_{G\cup Q}(e^{(1, 2){\bf X}}),  \lambda_{G\cup Q}(e^{(2, 1){\bf X}}) \}\\
				       & = \{1, ~ \frac{1}{2!}(x^2+4xy+4y^2) +(x+2y)+1, ~\frac{1}{2!}(4x^2+4xy+y^2) +(2x+y)+1\}.
				\end{aligned}
			\end{displaymath}		
We compute a ``reverse" reduced team of $\{P_1, P_2, P_3\}$, we get
\begin{displaymath}
				\begin{aligned}
				    \{P_{1}^{*}, P_{2}^{*}, P_{3}^{*}\}& = \{1,~(-\frac{1}{3}x^2+\frac{2}{3}xy+\frac{7}{6}y^2) +y, ~
					   (\frac{7}{6}x^2+\frac{2}{3}xy-\frac{1}{3}y^2)+x\}.
				\end{aligned}
			\end{displaymath}
%
%
%
Finally, the reduced Gr\"{o}bner basis is read from the ``reverse" reduced team $\{P_{1}^{*}, P_{2}^{*},
 P_{3}^{*}\}$ by Line 30 in Algorithm \ref{alg:Groebnerbasis}, we get
			\begin{displaymath}
			    \begin{aligned} \abovedisplayskip=2pt \belowdisplayskip=2pt
			        G_{1}& = y^2-\frac{7}{3}y+\frac{2}{3}x,\\
                    G_{2}& = xy-\frac{2}{3}y-\frac{2}{3}x, \\
			        G_{3}& = x^2+\frac{2}{3}y-\frac{7}{3}x. \\
			    \end{aligned}
			\end{displaymath}		
$\{G_1, G_2, G_3\}=\{y^2+\frac{2}{3}x-\frac{7}{3}y, xy-\frac{2}{3}x-\frac{2}{3}y, x^2-\frac{7}{3}x+\frac{2}{3}y\}$ is the reduced Gr\"{o}bner basis for $I(\Delta)$ w.r.t. $\prec$.

For general ideal interpolation, we can convert it to single point ideal interpolation. For example, given ideal interpolation conditions
		    $$\Delta=
			\left\{
				\begin{array}{l}
				    \delta_{\boldsymbol{\theta}_1} \circ {\rm span}\{P_{11}(D), P_{12}(D), \dots, P_{1s_1}(D)\}, \\
				    \delta_{\boldsymbol{\theta}_2} \circ {\rm span}\{P_{21}(D), P_{22}(D), \dots, P_{2s_2}(D)\}, \\
					\hspace*{2.5cm} \vdots\\
					\delta_{\boldsymbol{\theta}_k} \circ {\rm span}\{P_{k1}(D), P_{k2}(D), \dots, P_{ks_k}(D)\}, \\
				\end{array}
			\right.$$
where distinct points $\boldsymbol{\theta_i} \in \mathbb{F}^d, i=1,2,\dots ,k$ and $s_1+s_2+\dots +s_k=n.$ By(\ref{eqn:nonzeroCons2zeroCons}), we have
 $$\Delta=\delta_{\boldsymbol{0}} \circ {\rm span}\{P_{1}(D), P_{2}(D), \dots, P_{n}(D)\},$$
where $P_1=e^{\boldsymbol{\theta_1} {\bf X}}P_{11}, P_2=e^{\boldsymbol{\theta_1} {\bf X}}P_{12}, \dots, P_{s_1}=e^{\boldsymbol{\theta_1} {\bf X}}P_{1s_1}, P_{s_1+1}=e^{\boldsymbol{\theta_2} {\bf X}}P_{21},\dots, \\P_n=e^{\boldsymbol{\theta_k} {\bf X}}P_{ks_k}.$

The main cost of Algorithm \ref{alg:Groebnerbasis} is to calculate the rank of the matrix for obtaining the monomial basis of the quotient ring. In the case of single point ideal interpolation, if the polynomials in the interpolation conditions is a ``reverse" reduced team, then we can obtain the monomial basis of the quotient ring without calculation by Theorem \ref{thm:6}. Therefore, we get a faster algorithm for computing the reduced Gr\"{o}bner basis. We have the following algorithm for single point ideal interpolation.

\begin{algorithm}
	\caption{The reduced Gr\"{o}bner basis (Single point ideal interpolation)}
	\label{alg:Groebnerbasis-H}
	\begin{algorithmic}[1]
	\STATE{{\bf Input:} A monomial order $\prec$.}
	\STATE{\hspace*{1.1cm}The interpolation conditions
		    $$\Delta=\delta_{\boldsymbol{\theta}} \circ {\rm span}\{P_{1}(D), P_{2}(D), \dots, P_{n}(D)\}.$$}
    \STATE{\hspace*{1.1cm}where $\boldsymbol{\theta} \in \mathbb{F}^d,$ $\{P_{1}, P_{2}, \dots, P_{n}\}$ is a ``reverse" reduced team.}
	\STATE{{\bf Output:} $\{G_1,G_2,\dots, G_m\}$, the reduced Gr\"{o}bner basis for $I(\Delta)$.}	
	\STATE{//Initialization}

\STATE{$Q:=\{{\rm lm}(P_1), {\rm lm}(P_2), \dots, {\rm lm}(P_n)\}$, the monomial basis of the quotient ring, by Theorem \ref{thm:6};}
\STATE{${\rm{List}}:=\{x_i{\rm lm}(P_j), \forall 1\le i\le d, 1\le j\le n\}$;}
\STATE{${\rm{List}}:={\rm{List}}-Q$;}
\STATE{$G:=\emptyset$;}

\STATE{//Computing}

 \WHILE{${\rm{List}} \ne \emptyset$}
 \STATE{${\bf X}^{\boldsymbol{\alpha}}:= \rm{min} (\rm{List},\prec)$;}
  \STATE{$G:=G\cup \{{\bf X}^{\boldsymbol{\alpha}}\}$};
\STATE{$\rm{List}:=\rm{List}-\{{\rm{multiples~ of~}}{\bf X}^{\boldsymbol{\alpha}}\}$};

  \ENDWHILE

\STATE{$G:=\{{\bf X}^{\boldsymbol{\alpha_1}}, {\bf X}^{\boldsymbol{\alpha_2}}, \dots, {\bf X}^{\boldsymbol{\alpha_m}}\}$, the set of the leading monomials of the reduced Gr\"{o}bner basis};

 \STATE{$P_j:=\lambda_{G\cup Q}(e^{\boldsymbol{\theta} {\bf X}}P_j),~1\leq j\leq n$;}

\STATE{$\{P_j^{*}, 1\leq j\leq n\}$, a ``reverse" reduced team  w.r.t. $\prec$, by Algorithm \ref{alg:revredbasis};}
\STATE{${\bf X}^{\boldsymbol{\beta_j}}:={\rm lm}(P_j^{*}),~~1\le j\le n;$}

\FOR{$i=1:m$}
            \STATE{$G_i={\bf X}^{\boldsymbol{\alpha_i}}-\sum_{j=1}^n\left(\frac{(\boldsymbol{\alpha_i})!}{(\boldsymbol{\beta_j})!}\hat{P}_j^{*}(\boldsymbol{\alpha_i})\right) {\bf X}^{\boldsymbol{\beta_j}}$;}
	\ENDFOR
	\RETURN $\{G_1,G_2,\dots, G_m\}$.
	\end{algorithmic}
	\end{algorithm}

\section{A special example for ideal interpolation} \label{sec:6}

For general ideal interpolation, we convert it to single point ideal interpolation by (\ref{eqn:nonzeroCons2zeroCons}) and get the ``reverse" reduced team by Algorithm \ref{alg:revredbasis}, then we compute the reduced Gr\"{o}bner basis by Algorithm \ref{alg:Groebnerbasis-H}. The amount of these calculation is almost the same as MMM algorithm. However, in some special cases, we can first compute the Gr\"{o}bner basis of a single point ideal interpolation by Algorithm \ref{alg:Groebnerbasis-H}(this requires little calculation),then construct the original Gr\"{o}bner basis. The scale of the problem decreases in this case, thus the computational efficiency has been improved. A relevant example is given below.

\begin{example}
(Ideal interpolation)
Given the monomial order $\mathrm{lex}(y \prec x)$ and ideal interpolation conditions
			\begin{displaymath}
			    \Delta= \left\{
					\begin{array}{l} \abovedisplayskip=2pt \belowdisplayskip=2pt
					    \delta_{(0, 0)} \circ {\rm span}\{1, D_x, \frac{1}{2}D_x^2+D_y\}, \\
					    \delta_{(1, 2)} \circ {\rm span}\{1, D_x\}.						
					\end{array}
				\right.
			\end{displaymath}
\end{example}

	    \begin{figure}[htbp!] \setlength{\abovecaptionskip}{0pt} \setlength{\belowcaptionskip}{0pt}		
		 \centering
		 \subfigure[Interpolation points]{
		  \includegraphics[width=0.3\textwidth]{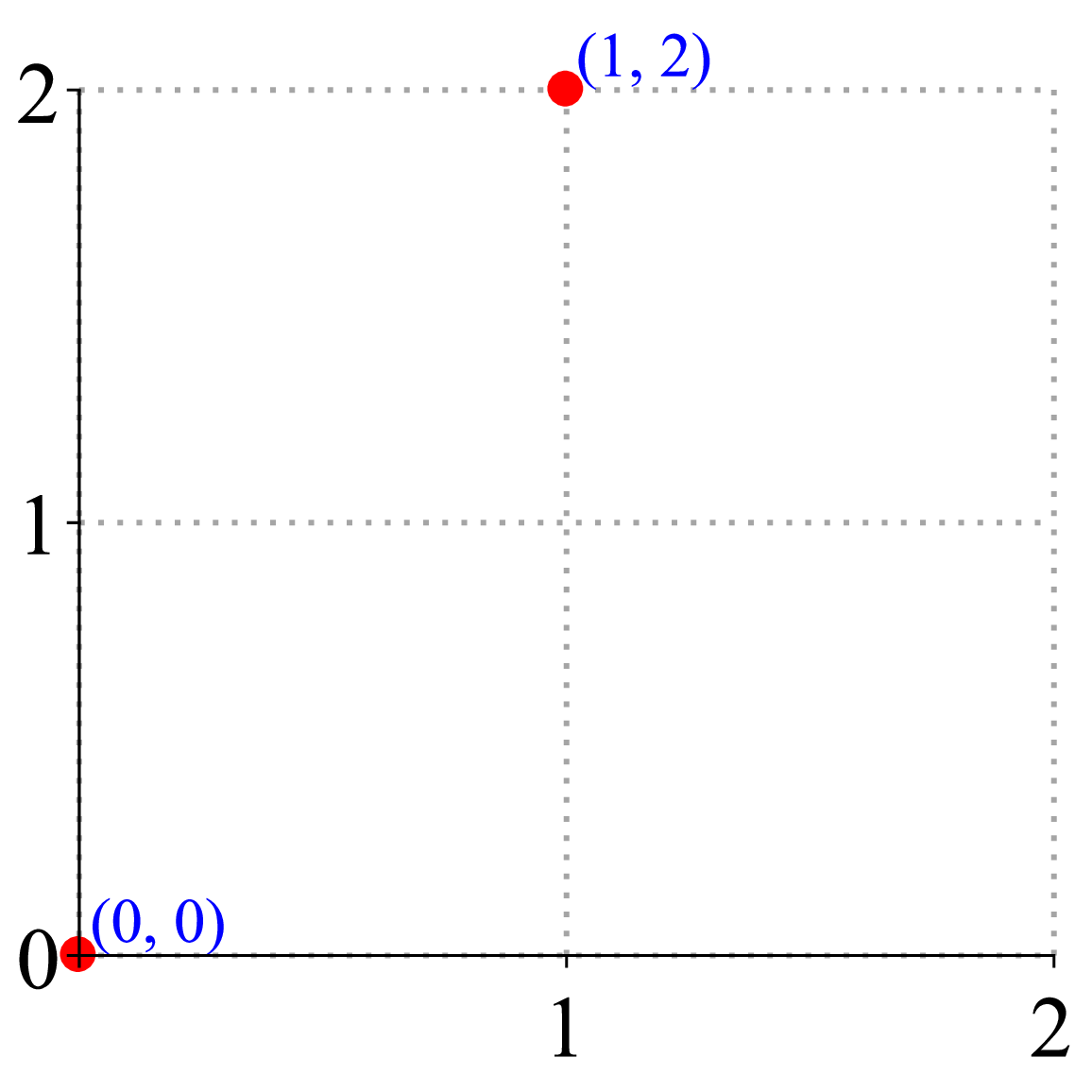}
		  \label{fig:subfig:Her1}
		 }
		 \hspace{0.1\textwidth}
		  \subfigure[Quotient ring basis]{
		  \includegraphics[width=0.3\textwidth]{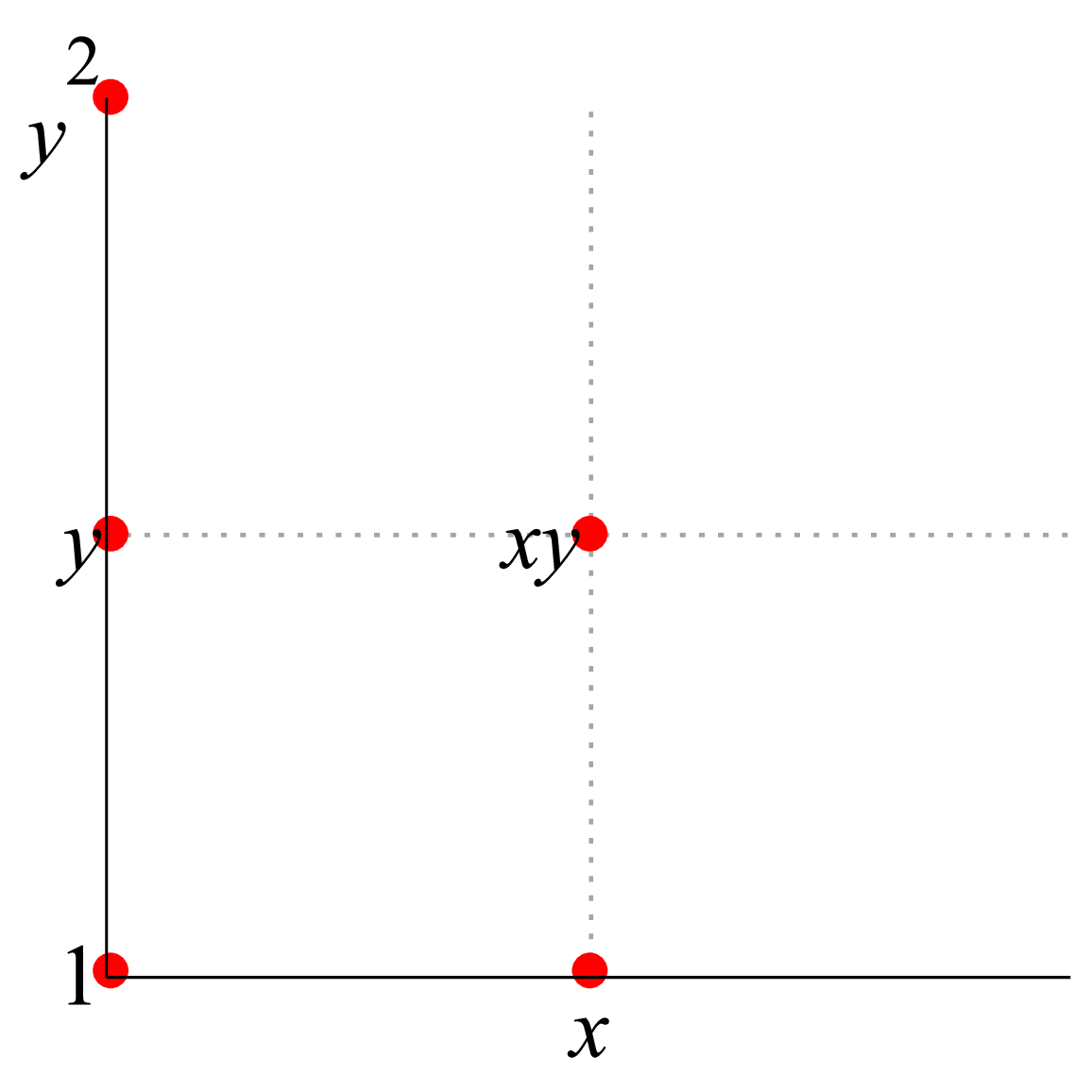}
		  \label{fig:subfig:Her2}
		 }
		 \caption{Ideal interpolation}
		 \label{fig:subfig:Her}
		\end{figure}
	

We split the original problem into two subproblems $\Delta_A$ and $\Delta_B$.

$$\Delta_A:=\delta_{(0, 0)} \circ {\rm span}\{1, D_x, \frac{1}{2}D_x^2+D_y\}.$$

Since $\{1,x,\frac{1}{2}x^2+y\}$ is a ``reverse" reduced team, $Q_A:=\{1,x,y\}$ is the monomial basis of the quotient ring $\mathbb{F}[{\bf X}]/ I(\Delta_A),$ $G_A:=\{y^2,xy,x^2\}$
is the set of the leading monomials of the reduced Gr\"{o}bner basis for $I(\Delta_A)$. By Line 21 in Algorithm \ref{alg:Groebnerbasis-H}, we get
			\begin{displaymath}
			    \begin{aligned} \abovedisplayskip=2pt \belowdisplayskip=2pt
			        G_{A1}& := y^2,\\
                    G_{A2}& := xy, \\
			        G_{A3}& := x^2-y. \\
			    \end{aligned}
			\end{displaymath}		
$\{G_{A1}, G_{A2}, G_{A3}\}=\{ y^2, xy, x^2-y\}$ is the reduced Gr\"{o}bner basis for $I(\Delta_A)$.

$$\Delta_B:=\delta_{(1, 2)} \circ {\rm span}\{1, D_x\}.$$

With the same procedure, we can get
%
			\begin{displaymath}
			    \begin{aligned} \abovedisplayskip=2pt \belowdisplayskip=2pt
			        G_{B1}&: = y-2,\\
                    G_{B2}& := x^2-2x+1. \\
			    \end{aligned}
			\end{displaymath}		
$\{G_{B1}, G_{B2}\}=\{ y-2, x^2-2x+1\}$ is the reduced Gr\"{o}bner basis for $I(\Delta_B)$.

Notice that polynomials $G_{A1}=y^2$ and $G_{B1}=y-2$ are coprime, there exist polynomials $u=\frac{1}{4}$ ,$v=-\frac{1}{4}(y+2)$ such that
$$uG_{A1}+vG_{B1}=1.$$
Let \begin{displaymath}
			    \begin{aligned} \abovedisplayskip=2pt \belowdisplayskip=2pt
			        G_{1}& := G_{A1}G_{B1}=y^3-2y^2,\\
                    G_{2}& := G_{A2}G_{B1}=xy^2-2xy,\\
           {\tilde G}_{3}& := (uG_{A1})G_{B2}+(vG_{B1})G_{A3}=x^2-\frac{1}{2}xy^2+\frac{1}{4}y^3+\frac{1}{4}y^2-y. \\
			    \end{aligned}
			\end{displaymath}	
Since $G_1,G_2,{\tilde G}_3$ all lie in $I(\Delta_A \cup \Delta_B)=I(\Delta)$, the linear combination ${\tilde G}_3-cG_1-eG_2$ lies in $I(\Delta)$, where $c=\frac{1}{4}$ is the coefficient of $y^3$ in ${\tilde G}_3$ and $e=-\frac{1}{2}$ is the coefficient of $xy^2$ in ${\tilde G}_3$.
Let $$G_{3}:={\tilde G}_3-cG_1-eG_2=x^2-xy+\frac{3}{4}y^2-y. $$
Notice that the leading term of $G_i$ in particular divides none of the nonleading terms of $G_j$, for $i,j \in \{1,2,3\}$, meanwhile the dimension of $\mathbb{F}[{\bf X}]/\langle G_1,G_2,G_3\rangle$ is 5. Therefore, $\{G_1,G_2,G_3\}$ is the reduced Gr\"{o}bner basis for $I(\Delta)$.

~\\


${\bf References}$

\bibliographystyle{elsarticle-num}

\end{document}